\newcommand{\R}{\mathbb R}
\newcommand{\N}{\mathbb N}
\newcommand{\Simp}{\mathbb S}
\newcommand{\dd}{\mathrm{d}}
\newcommand{\lam}{\lambda}
\newcommand{\bfx}{\mathbf{x}}
\newcommand{\bfz}{\mathbf{0}}
\newcommand{\hK}{\widehat{K}}
\newcommand{\T}{\mathcal{T}}
\newcommand{\I}{\mathcal{I}}
\newcommand{\PP}{\mathcal{P}}
\newcommand{\Ka}{K_{\alpha}}
\newcommand{\Kab}{K_{\alpha\beta}}
\begin{document}
\newtheorem{theorem}{Theorem}[section]
\newtheorem{definition}[theorem]{Definition}
\newtheorem{lemma}[theorem]{Lemma}
\newtheorem{corollary}[theorem]{Corollary}

\numberwithin{equation}{section}

\title{Extending Babu\v{s}ka-Aziz's theorem to higher-order
Lagrange interpolation}
\author{\|Kenta |Kobayashi|, Kunitachi,
        \|Takuya |Tsuchiya|, Matsuyama}

\rec {September 1, 2015}

\abstract
We consider the error analysis of Lagrange interpolation on triangles and
tetrahedrons.  For Lagrange interpolation of order one, Babu\v{s}ka and Aziz
showed that squeezing a right isosceles triangle perpendicularly does not
deteriorate the optimal approximation order.  We extend their technique
and result to higher-order Lagrange interpolation on both triangles and
tetrahedrons.  To this end, we make use of difference quotients of
functions with two or three variables.  Then, the error estimates on
squeezed triangles and tetrahedrons are proved by a method that is a
straightforward extension of the original given by Babu\v{s}ka-Aziz.
\endabstract

\keywords
Lagrange interpolation, Babu\v{s}ka-Aziz's technique,
difference quotients
\endkeywords

\subjclass
65D05, 65N30
\endsubjclass

\thanks
The authors are supported by JSPS Grant-in-Aid for Scientific Research
(C) 25400198 and (C) 26400201.
The second author is partially supported by
JSPS Grant-in-Aid for Scientific Research (B) 23340023.
We dedicate this paper to Prof.\ Ivo Babu\v{s}ka in celebration of his
90th birthday.
\endthanks

\newcommand{\vvskip}{\vspace{5pt}}

\section{Introduction}

Lagrange interpolation on triangles and tetrahedrons and the associated error
estimates are important subjects in numerical analysis. In particular, they are
crucial in the error analysis of finite element methods. Let $d = 2$ or $3$.
Throughout this paper, $K \subset \R^d$ denotes a triangle or tetrahedron
with vertices $\bfx_i$, $i=1,\cdots,d+1$.  We always suppose that
triangles and tetrahedrons are closed sets in this paper. Let $\lam_i$
be its barycentric coordinates with respect to $\bfx_i$. By definition,
we have $0 \le \lam_i \le 1$, $\sum_{i=1}^{d+1} \lam_i =1$.  Let $\N_0$
be the set of nonnegative integers, and
$\gamma = (a_1,\cdots,a_{d+1}) \in \N_0^{d+1}$ be a multi-index.
Let $k$ be a positive integer. If $|\gamma| := \sum_{i=1}^{d+1}a_i = k$, then
${\gamma}/{k} := \left({a_1}/{k}, \cdots, {a_{d+1}}/{k}\right)$
can be regarded as a barycentric coordinate in $K$.
The set $\Sigma^k(K)$ of points on $K$ is defined by
\begin{equation*}
   \Sigma^k(K) := \left\{
   \frac{\gamma}{k} \in K \Bigm| |\gamma| = k, \;
     \gamma \in \N_0^{d+1} \right\}.
\end{equation*}

Let $1 < p \le \infty$. From Sobolev's imbedding theorem and Morry's
inequality, we have the continuous imbeddings
\begin{gather*}
   W^{2,p}(K) \subset C^{1,1-d/p}(K), \quad p > d, \\
   W^{2,d}(K) \subset W^{1,q}(K) \subset C^{0,1-d/q}(K),
      \quad \forall q > d, \\
  W^{2,p}(K) \subset W^{1,dp/(d-p)}(K) \subset C^{0,2-d/p}(K),
      \quad \frac{d}{2} < p < d.
\end{gather*}
If $d=3$, we also have the continuous imbeddings
\begin{gather*}
  W^{3,3/2}(K) \subset W^{2,3}(K) \subset W^{1,q}(K)
  \subset C^{0,1-3/q}(K), \quad \forall q > 3, \\
  W^{3,p}(K) \subset W^{2,3p/(3-p)}(K) \subset W^{1,3p/(3-2p)}(K)
  \subset C^{0,3-3/p}(K), \quad 1 < p < \frac{3}{2}.
\end{gather*}
Although Morry's inequality may not be applied, the continuous
imbedding $W^{d,1}(K)$ $\subset C^{0}(K)$ $(d = 2,3)$ still holds.
For the imbedding theorem, see \cite{AdamsFournier},
\cite{Brezis}, and \cite{KJF}.  In the sequel we always suppose that $p$
is taken so that the imbedding $W^{k+1,p}(K) \subset C^{0}(K)$
holds, that is,
\begin{align*}
     1 \le p \le \infty, \quad & \text{ if } d=2, \; k+1 \ge 2 \text{ or }
    d =3, \; k+1 \ge 3, \quad \text{ and } \\
    \frac{3}{2} < p \le \infty, \quad & \text{ if } d=3, \; k+1 =2.
\end{align*}
Note that our discussion includes the case $d=k+1$, $p=1$
that is sometimes ignored in literature.

We define the subset $\T_p^{k}(K) \subset W^{k+1,p}(K)$ by
\begin{gather*}
   \T_p^{k}(K) := \left\{v \in W^{k+1,p}(K) \Bigm|
     v(\bfx) = 0, \; \forall \bfx \in \Sigma^k(K) \right\}.
\end{gather*}
Let $\mathcal{P}_k$ be the set of polynomials with two or three variables
for which the degree is at most $k$.  For a continuous function
$v \in C^0(K)$, the $k$th-order Lagrange interpolation
$\I_K^k v \in \mathcal{P}_k$ is defined by
\[
   v(\bfx) = (\I_K^kv)(\bfx), \qquad \forall \bfx \in \Sigma^k(K).
\]
From this definition, it is clear that
\[
     v - \I_K^k v \in \T_p^{k}(K), \quad \forall 
     v \in W^{k+1,p}(K).
\]

For an integer $m$ such that $0 \le m \le k$, $B_p^{m,k}(K)$
is defined by
\begin{gather*}
    B_p^{m,k}(K) := \sup_{v \in \T_p^{k}(K)}
    \frac{|v|_{m,p,K}}{|v|_{k+1,p,K}}.
\end{gather*}
Note that we have
\begin{gather*}
 B_p^{m,k}(K) = \inf\left\{C ; 
   |v - \I_K^k v|_{m,p,K} \le C |v|_{k+1,p,K}, \
   \forall v \in W^{k+1,p}(K)\right\},
% \label{equivalent}
\end{gather*}
that is, $B_p^{m,k}(K)$ is the \textit{best} constant $C$ for the
error estimation
\[
   |v - \I_K^k v|_{m,p,K} \le C |v|_{k+1,p,K}, \qquad 
    \forall v \in W^{k+1,p}(K).
\]

To establish the mathematical foundation of the finite element methods,
we must show that $B_p^{m,p}(K)$ is bounded.
Many textbooks on finite element methods, such as those by Ciarlet
\cite{Ciarlet}, Brenner-Scott \cite{BrennerScott}, and Ern-Guermond
\cite{ErnGuermond}, present the following theorem.
Let $h_K$ be the diameter of $K$ and  $\rho_K$ be the
radius of the inscribed ball of $K$.

\vvskip
\noindent
\textbf{Shape-regularity}.
{\it Let $\sigma > 2$ be a constant. If $h_K/\rho_K \le \sigma$ and
$h_K \le 1$, then there exists a constant $C = C(\sigma)$ that is
independent of $h_K$ such that}
\[
   \|v - \I_K^1 v\|_{1,2,K} \le C h_K |v|_{2,2,K}, \qquad
   \forall v \in H^2(K).
\]

\vvskip
The maximum of the ratio $h_K/\rho_K$ in a triangulation is called the
\textit{chunkiness parameter} \cite{BrennerScott}.  The shape
regularity, however, is not necessarily needed to obtain an error estimate
for triangles and tetrahedrons.  
For triangles, the following estimations are well-known \cite{BabuskaAziz},
\cite{BarnhillGregory}, \cite{Jamet}.

\vvskip
\noindent
\textbf{The maximum angle condition}. 
{\it Let $\theta_1$, $(\pi/3 \le \theta_1 < \pi)$ be a constant.  If
any angle $\theta$ of $K$ satisfies $\theta \le \theta_1$ and
$h_K \le 1$, then there exists a constant $C = C(\theta_1)$
that is independent of $h_K$ such that}
\begin{equation}
   \|v - \I_K^1 v\|_{1,2,K} \le C h_K |v|_{2,2,K}, \qquad
   \forall v \in H^2(K).
   \label{maximum_angle}
\end{equation}

\vvskip
Later, K\v{r}\'{i}\v{z}ek \cite{Krizek1} introduced the
\textit{semiregularity condition} for triangles, which is equivalent to
the maximum angle condition.  Let $R_K$ be the circumradius of $K$.

\vvskip
\noindent
\textbf{The semiregularity condition}.
{\it Let $p > 1$ and  $\sigma > 0$ be a constant. If $R_K/h_K \le \sigma$ and
$h_K \le 1$, then there exists a constant $C = C(\sigma)$
that is independent of $h_K$ such that}
\begin{equation}
   \|v - \I_K^1 v\|_{1,p,K} \le C h_K |v|_{2,p,K}, \qquad
   \forall v \in W^{2,p}(K).
   \label{krizek} 
\end{equation}

For tetrahedrons, the following estimation is well-known
\cite{Krizek2}, \cite{Duran}.

\vvskip
\noindent
\textbf{K\v{r}\'{i}\v{z}ek's maximum angle condition}. 
{\it Let $\theta_2$, $(\pi/3 \le \theta_2 < \pi)$ be a constant.
Let $\gamma_k$ be the maximum angle of faces of a tetrahedron $K$
and $\varphi_K$ be the maximum angle between faces of $K$.
If  $\gamma_k \le \theta_2$, $\varphi_K \le \theta_2$,
and $h_K \le 1$, then there exists a constant $C = C(\theta_2)$
that is independent of $h_K$ such that}
\begin{equation}
   \|v - \I_K^1 v\|_{1,p,K} \le C h_K |v|_{2,p,K}, \qquad
   \forall v \in W^{2,p}(K), \quad 2 < p \le \infty.
  \label{KrizekDuran}
\end{equation}

Jamet \cite{Jamet} presented 
a general result which covers both triangles and tetrahedrons.
 Let $E_d := \{e_s\}_{s=1}^d \subset \R^d$
be a set of unit vectors which are linearly independent.
Let $\xi \in \R^d$ be an unit vector and
$\theta_s$, $0 \le \theta_s \le \frac{\pi}{2}$ be the angle between
$\xi$ and the line which is defined by $e_s$.  Define
\[
   \theta(E_d) := \max_{\xi \in \R^d} \min_{e_s \in E_d}\{\theta_s\}.
\]
Let $K \subset \R^d$ be $d$-simplex.  Let $N:=d(d+1)/2$ and $E_N$
be the set of $N$ unit vectors that are parallel to the edges of $K$.
Define $\displaystyle \theta_K := \min_{E_d \subset E_N}\{\theta(E_d)\}$.
Note that if $d=2$ and $K$ is an obtuse triangle, then $2\theta_K$ is
the maximum angle of $K$.

\begin{theorem}[Jamet]\label{Jamettheorem}
Let $1 \le p \le \infty$. Let $m \ge 0$, $k \ge 1$ be integers such that
$k+1-m > 2/p$ $(1 < p \le \infty)$ or $k-m \ge 1$ $(p=1)$ if $d=2$, or
$k+1-m > 3/p$ if $d=3$.
Then, the following estimate holds:
\begin{equation}
  |v - \I_K^k v|_{m,p,K} \le C \frac{h_K^{k+1-m}}{(\cos \theta_K)^m}
  |v|_{k+1,p,K},   \quad \forall v \in W^{k+1,p}(K),
  \label{jamet}
\end{equation}
where $C$ depends only on $k$, $p$.
\end{theorem}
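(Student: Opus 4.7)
The plan is a change-of-variables argument that splits the affine map from a fixed reference simplex into an ``angular'' factor (which contributes $\cos\theta_K$) and a ``metric'' factor (which contributes $h_K$). First, by the very definition of $\theta_K$ I select $d$ edges of $K$ whose unit direction vectors $\{e_1,\dots,e_d\}\subset E_N$ attain the minimum $\theta(E_d)=\theta_K$. Letting $h_s$ be the length of edge $s$, $\bfx_1$ the common endpoint, and $A=[h_1 e_1\,|\,\cdots\,|\,h_d e_d]$, the affine map $F(\hat\bfx)=\bfx_1+A\hat\bfx$ sends the standard $d$-simplex $\hK$ bijectively onto $K$ and carries $\Sigma^k(\hK)$ onto $\Sigma^k(K)$, so the Lagrange interpolations intertwine: $(\I_K^k v)\circ F=\I_{\hK}^k(v\circ F)$. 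The hypothesis on $(k,m,p,d)$ ensures $W^{k+1,p}(\hK)\subset C^0(\hK)$, so on the fixed reference $\hK$ the Deny--Lions / Bramble--Hilbert lemma yields
\[
|\hv-\I_{\hK}^k\hv|_{m,p,\hK}\le C_{k,p,d}\,|\hv|_{k+1,p,\hK},\qquad \hv:=v\circ F.
\]

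Next I factor $A=N^T D$ with $N^T=[e_1\,|\,\cdots\,|\,e_d]$ and $D=\mathrm{diag}(h_1,\dots,h_d)$. The columns of $N^T$ are unit vectors, so $\|N^T\|\le\sqrt d$; and for any unit $\xi$, $\|N\xi\|^2=\sum_s\langle e_s,\xi\rangle^2\ge\cos^2(\min_s\mathrm{angle}(\xi,e_s))\ge\cos^2\theta_K$ by the very definition of $\theta_K$ as a max-min, whence $\|N^{-T}\|\le 1/\cos\theta_K$. I then transport the reference estimate back to $K$ through the intermediate axis-aligned simplex $K_c:=D(\hK)=N^{-T}(K-\bfx_1)$, whose legs have lengths $h_s\le h_K$. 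The diagonal step $\hK\to K_c$ is responsible for the $h_K^{k+1-m}$ factor, and the angular step $K_c\to K$ (by the matrix $N^T$) contributes exactly $\|N^{-T}\|^m\le(\cos\theta_K)^{-m}$ on the left-hand $m$-th seminorm and only the bounded constant $\|N^T\|^{k+1}\le d^{(k+1)/2}$ on the right-hand $(k+1)$-th seminorm, the Jacobian factors $|\det N|$ and $|\det D|$ cancelling between the two sides.

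The chief obstacle is the \emph{anisotropic} Bramble--Hilbert estimate on the axis-aligned simplex $K_c$, giving the diameter $h_K$ rather than the minimum leg length $h_{\min}:=\min_s h_s$ on the right. A direct application of the isotropic lemma on $\hK$ followed by the diagonal pullback through $D$ yields only a factor $h_{\min}^{-m}$, which degenerates on thin $K_c$. The necessary refinement is specific to Lagrange interpolation at lattice nodes: each $m$-th partial $\partial^\beta(v-\I_{K_c}^k v)$ must be controlled by a sum of $(k+1)$-th partials $\partial^\gamma v$ with $\gamma\ge\beta$ coordinatewise, so that the surplus of total order $k+1-m$ pulls out a product $\prod_s h_s^{\gamma_s-\beta_s}\le h_K^{k+1-m}$ governed by the diameter alone. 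Once this anisotropic ingredient is in place, the rest of the proof is a routine change-of-variables bookkeeping following the decomposition $A=N^T D$.
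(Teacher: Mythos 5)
This theorem is quoted from Jamet's 1976 paper; the present paper gives no proof of it, so your attempt can only be judged on its own terms. Your reduction is set up correctly: the factorization $A=N^TD$, the bound $\sigma_{\min}(N)\ge\cos\theta_K$ coming from $\|N\xi\|^2=\sum_s\langle e_s,\xi\rangle^2\ge\cos^2(\min_s\theta_s)\ge\cos^2\theta_K$, the cancellation of the determinant factors, and the placement of $\|N^{-T}\|^m$ on the left and $\|N^T\|^{k+1}$ on the right are all sound bookkeeping. The problem is that the step you yourself flag as ``the chief obstacle'' --- the anisotropic estimate on the axis-aligned simplex $K_c$, namely $\|\partial^\beta(v-\I_{K_c}^kv)\|_{0,p,K_c}\le C\sum_{|\gamma|=k+1-m}h^\gamma\|\partial^{\beta+\gamma}v\|_{0,p,K_c}$ --- is asserted, not proved. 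That estimate is the entire analytic content of the theorem; it is essentially the result this paper labors to establish in Theorem~\ref{squeezingtheorem} via difference quotients, the spaces $\Xi_p^{\delta,k}$, and the compactness argument of Lemma~\ref{lem33}, and it is known to \emph{fail} for some ranges of $(m,k,p)$ in $d=3$ (which is precisely why all the quoted theorems carry restrictions on $p$). Declaring the remainder ``routine change-of-variables bookkeeping'' once this ingredient ``is in place'' leaves the proof with its core missing.

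A second, related defect: you never actually use the hypothesis $k+1-m>d/p$. You invoke it only to claim $W^{k+1,p}(\hK)\subset C^0(\hK)$, but that embedding needs only $k+1>d/p$; the stronger condition involving $m$ must enter in the proof of the anisotropic lemma itself (in Jamet's argument it is what allows traces, or point evaluations, of the $(k+1-m)$-order derivatives on lower-dimensional subsets --- compare Lemma~\ref{lem31} and the condition \eqref{p-choice} in this paper, which play exactly that role). Since your outline never touches the place where this hypothesis is consumed, the dependence of the conclusion on the stated restrictions is unaccounted for. To complete the proof you would need to either reproduce Jamet's directional-derivative argument along the edges $e_1,\dots,e_d$, or prove the componentwise-ordered anisotropic estimate on $K_c$ by a mechanism like the one in Sections~\ref{DQ}--\ref{sect-proof} here.
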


\noindent
\textit{Remark}: Note that in \cite[Th\'{e}or\`{e}me~3.1]{Jamet}
the case $d=2$ and $p=1$ is not mentioned explicitly but clearly holds for
triangles.

\vspace{0.2cm}
For further results of error estimations on ``skinny elements'',
readers are referred to the monograph by Apel \cite{Apel}.

The common idea among the above mentined estimations is that (i)
show an error estimate for a particular type of elements,
then (ii) extend it for general elements by affine transformation.
To prove the maximum angle condition for triangles, for example,
Babu\v{s}ka and Aziz showed the following
theorem \cite[Lemma~2.2, Lemma~2.4]{BabuskaAziz}.
Let $\hK$ be the right triangle with vertices
$(0,0)^\top$, $(1,0)^\top$, and $(0,1)^\top$, and $K_\alpha$ be the right
triangle with vertices $(0,0)^\top$, $(1,0)^\top$, and
$(0,\alpha)^\top$ $(0 < \alpha \le 1)$.  That is, $K_\alpha$
is obtained by squeezing $\hK$.

\begin{theorem}[Babu\v{s}ka-Aziz]\label{BabuskaAziz}
There exists a constant independent of $\alpha$ $(0 < \alpha \le 1)$
such that $B_2^{m,1}(K_\alpha) \le C$, $m = 0$, $1$.
As an immediate consequence, we obtain the error estimation of 
Lagrange interpolation $\I_K^1$ on a right triangle $K$; for $m = 0, 1$,
\[
   |v - \I_{K}^1 v|_{m,2,K} \le C h_{K}^{2-m}|v|_{2,2,K}.
\]
\end{theorem}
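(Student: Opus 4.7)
The strategy is to follow the original Babu\v{s}ka--Aziz argument: pull $v$ back to the fixed reference triangle $\hK$ via an anisotropic affine map, rewrite the claim as several Poincar\'e-type inequalities on $\hK$ with explicit $\alpha$-tracking, and verify them one at a time. Concretely I would set $\Phi_\alpha\colon\hK\to K_\alpha$, $\Phi_\alpha(\hx,\hy)=(\hx,\alpha\hy)$, and $\hv:=v\circ\Phi_\alpha\in H^2(\hK)$; this sends $\Sigma^1(\hK)$ to $\Sigma^1(K_\alpha)$, so $v\in\T_2^1(K_\alpha)$ becomes $\hv(0,0)=\hv(1,0)=\hv(0,1)=0$. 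A change of variables gives
\begin{align*}
|v|_{0,2,K_\alpha}^2 &= \alpha\|\hv\|_{L^2(\hK)}^2,\\
|v|_{1,2,K_\alpha}^2 &= \alpha\|\hv_{\hx}\|^2+\alpha^{-1}\|\hv_{\hy}\|^2,\\
|v|_{2,2,K_\alpha}^2 &= \alpha\|\hv_{\hx\hx}\|^2+2\alpha^{-1}\|\hv_{\hx\hy}\|^2+\alpha^{-3}\|\hv_{\hy\hy}\|^2,
\end{align*}
with all norms on the right taken over $\hK$. Proving $B_2^{m,1}(K_\alpha)\le C$ uniformly in $\alpha\in(0,1]$ then reduces to bounding each summand on the left, for $m=0,1$, by $|v|_{2,2,K_\alpha}^2$.

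The bounds on $\alpha\|\hv\|^2$ and $\alpha\|\hv_{\hx}\|^2$ are immediate. The classical $H^2$ Poincar\'e/Bramble--Hilbert inequality on the fixed reference $\hK$, $\|\hv\|_{L^2(\hK)}^2+|\hv|_{1,\hK}^2\le C_0|\hv|_{2,\hK}^2$, follows from Rellich--Kondrachov compactness: a counter-sequence would yield a nonzero $\mathcal{P}_1$ limit vanishing at three non-collinear vertices, a contradiction. Since $\alpha\le 1$ forces $|v|_{2,2,K_\alpha}^2\ge\alpha|\hv|_{2,\hK}^2$, this gives at once $\alpha\|\hv\|^2\le C|v|_{2,2,K_\alpha}^2$ and $\alpha\|\hv_{\hx}\|^2\le C|v|_{2,2,K_\alpha}^2$, which handle the case $m=0$ and the $\hx$-part of $m=1$.

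The main obstacle is the remaining piece $\alpha^{-1}\|\hv_{\hy}\|^2$. The naive Poincar\'e bound is insufficient here, because it would produce a $C\alpha^{-1}\|\hv_{\hx\hx}\|^2$ term that is not absorbed by $\alpha\|\hv_{\hx\hx}\|^2$ as $\alpha\to 0$. I would therefore establish the refined, $\alpha$-free inequality
\[
\|\hv_{\hy}\|_{L^2(\hK)}^2 \le C\bigl(\|\hv_{\hx\hy}\|^2+\|\hv_{\hy\hy}\|^2\bigr),
\]
in which $\|\hv_{\hx\hx}\|^2$ is \emph{absent}. Combined with $\alpha\le 1$, this gives $\alpha^{-1}\|\hv_{\hy}\|^2\le C(\alpha^{-1}\|\hv_{\hx\hy}\|^2+\alpha^{-3}\|\hv_{\hy\hy}\|^2)\le C|v|_{2,2,K_\alpha}^2$, closing the $m=1$ case.

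This refined inequality is the technical heart of the proof, and I would prove it by contradiction and compactness. If it failed, there would exist $\hv_n\in H^2(\hK)$ with the three-vertex vanishing such that, on writing $u_n:=\hv_{n,\hy}\in H^1(\hK)$, one has $\|u_n\|_{L^2(\hK)}^2=1$ and $\|\nabla u_n\|_{L^2(\hK)}^2\to 0$. Rellich--Kondrachov then furnishes a subsequence with $u_n\rightharpoonup u$ in $H^1(\hK)$ and strongly in $L^2(\hK)$; weak lower semicontinuity forces $\nabla u\equiv 0$, so $u$ is a constant $c$ with $c^2|\hK|=1$. On the other hand, $\hv_n(0,0)=\hv_n(0,1)=0$ together with the fundamental theorem of calculus in $\hy$ gives $\int_0^1 u_n(0,\hy)\,d\hy=0$ for every $n$, and continuity of the edge trace under weak $H^1$ convergence carries this linear constraint to the limit, yielding $c=0$ and a contradiction. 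This compactness/trace step is precisely what the difference-quotient machinery announced in the abstract will systematically generalize to the higher-order cases. The stated error estimate on an arbitrary right triangle $K$ of diameter $h_K$ then follows from $B_2^{m,1}(K_\alpha)\le C$ by an isotropic dilation $K\mapsto h_K^{-1}K$ and the standard scaling of Sobolev seminorms.
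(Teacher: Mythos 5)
Your proposal is correct and is essentially the argument the paper uses (for Theorem~\ref{BabuskaAziz} the paper cites Babu\v{s}ka--Aziz, but its proof of Theorem~\ref{squeezingtheorem} specialized to $d=2$, $k=1$, $p=2$ is exactly your computation): the anisotropic pullback with $\alpha$-power bookkeeping is display \eqref{BabuskaAziztechnique}, and your refined inequality $\|\hv_{\hy}\|^2\le C(\|\hv_{\hx\hy}\|^2+\|\hv_{\hy\hy}\|^2)$ under the edge-mean-zero constraint $\int_0^1\hv_{\hy}(0,t)\,\dd t=0$ is precisely the finiteness of $A_2^{(0,1),1}$ (Lemma~\ref{lem33} with Lemma~\ref{lem32}), proved there by the same compactness-plus-trace contradiction. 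The only cosmetic difference is that you treat the $\hv_{\hx}$ term by the crude Bramble--Hilbert bound while the paper uses the analogous constrained constant $A_p^{(1,0),k}$; both suffice since that term carries no negative power of $\alpha$.
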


Theorem~\ref{BabuskaAziz} claims that squeezing a right isosceles
triangle perpendicularly does not deteriorate the 
optimal approximation order of $\I_K^1$.
Babu\v{s}ka and Aziz then claim that the estimate
\eqref{maximum_angle} for general triangular elements
is obtained by affine transformations.
Kobayashi and Tsuchiya \cite{KobayashiTsuchiya1} 
extended Theorem~\ref{BabuskaAziz} for any $p$ $(1 \le p \le \infty)$.

Now, let $\hK$ denote also the reference tetrahedron with
vertices $(0,0,0)^\top$, $(1,0,0)^\top$, $(0,1,0)^\top$, and 
$(0,0,1)^\top$.  Let $\Kab$ be the ``right''
tetrahedron with vertices $(0,0,0)^\top$, $(1,0,0)^\top$, 
$(0,\alpha,0)^\top$, and $(0,0,\beta)^\top$ $(0 < \alpha,\beta \le 1)$.

The aim of this paper is to extend Theorem~\ref{BabuskaAziz} and 
establish the following theorem.

\begin{theorem}\label{squeezingtheorem}
If $d = 2$, there exists a constant $C_{k,m,p}$ such that, 
for $m = 0, \cdots, k$,
\begin{align}
   B_p^{m,k}(K_\alpha) & := \sup_{v \in \T_p^{k}(K_\alpha)}
  \frac{|v|_{m,p,K_\alpha}}{|v|_{k+1,p,K_\alpha}}
   \le C_{k,m,p}, \quad  \; k \ge 1, \; 1 \le p \le \infty.
  \label{extension-2d}
\end{align}
If $d = 3$, there exists a constant $C_{k,m,p}$ such that,
for $m = 0, \cdots, k$,
\begin{align}
   B_p^{m,k}(\Kab) & := \hspace{-0.2cm}
   \sup_{v \in \T_p^{k}(\Kab)}
  \frac{|v|_{m,p,\Kab}}{|v|_{k+1,p,\Kab}}
   \le C_{k,m,p}, \quad
   \begin{cases}
     k - m = 0, & 2 < p \le \infty, \\ 
     k=1, \; m=0, & \frac{3}{2} < p \le \infty, \\
     k\ge2, \; k-m \ge 1, & 1 \le p \le \infty.
   \end{cases}
  \label{extension-3d} 
\end{align}
\end{theorem}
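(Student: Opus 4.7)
The plan is to prove both \eqref{extension-2d} and \eqref{extension-3d} by induction on the Lagrange order $k$, replacing the first derivatives used in Babu\v{s}ka-Aziz's original argument by \emph{discrete difference quotients} so as to step down one degree at a time. For the base case, the two-dimensional estimate with $k=1$ is precisely the $L^p$ version of Theorem~\ref{BabuskaAziz} due to Kobayashi-Tsuchiya; for $\Kab$ with $k=1$ the Babu\v{s}ka-Aziz decomposition must be adapted from two to three variables, and the restrictions $p>3/2$ (for $m=0$) and $p>2$ (for $m=1$) enter exactly through the embedding $W^{2,p}(\Kab) \hookrightarrow C^0(\Kab)$ together with the edge/face trace estimates needed to set up the decomposition.

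For the inductive step on $\Ka$, the key observation is that if $v \in \T_p^k(\Ka)$ vanishes on $\Sigma^k(\Ka)$ then, for the steps $h_x := 1/k$ and $h_y := \alpha/k$ matching the spacing of the Lagrange nodes in each coordinate direction, the forward differences
\begin{equation*}
   (D_{h_x}^x v)(x,y) := v(x+h_x, y) - v(x,y), \qquad
   (D_{h_y}^y v)(x,y) := v(x, y+h_y) - v(x,y)
\end{equation*}
vanish on the set $\Sigma^{k-1}\bigl(\tfrac{k-1}{k}\Ka\bigr)$, i.e.\ on the order-$(k{-}1)$ Lagrange nodes of an \emph{isotropically} contracted copy of $\Ka$ with the same aspect ratio. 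The inductive hypothesis applied to each of these difference quotients on the contracted element then yields estimates with constants independent of $\alpha$, and reconstructing $v$ from its differences (equivalently, expressing $\partial v$ as an $L^p$-limit of $D_h v / h$) propagates the bound back to $v$ itself. The three-dimensional argument is identical in structure, with three coordinate-wise differences of steps $1/k$, $\alpha/k$, and $\beta/k$; the case distinctions in \eqref{extension-3d} reflect exactly the Sobolev embedding hierarchy listed in the introduction, needed to keep $v \in C^0(\Kab)$ throughout the induction.

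The main technical obstacle is tracking uniformity in the squeezing parameters. Forward differences in a squeezed direction carry a step-size factor of $\alpha$ (or $\beta$), which must cancel against the natural scaling of the seminorms on the contracted element; the preservation of aspect ratio under the scaling $\Ka \mapsto \tfrac{k-1}{k}\Ka$ is what makes this cancellation exact, so that no power of $\alpha$ or $\beta$ accumulates over the $k$ induction steps. The careful bookkeeping of these cancellations, and of the Sobolev embedding invoked at each level, is essentially routine in two dimensions but becomes the most delicate point of the argument in three dimensions with two independent squeezings and mixed embedding constraints.
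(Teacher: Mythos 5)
Your setup is on the right track --- the observation that the first-order forward differences of $v\in\T_p^k(\Ka)$ taken at the node spacing vanish on the order-$(k-1)$ nodes of the contracted element $\tfrac{k-1}{k}\Ka$ is correct, and it is essentially the recursive relation for difference quotients that the paper also exploits. But the induction collapses at the step you describe as ``reconstructing $v$ from its differences.'' The inductive hypothesis controls $|D_h v|_{m,p}$ only for the \emph{fixed} steps $h=1/k$ (resp.\ $\alpha/k$, $\beta/k$), because only for those steps does $D_hv$ vanish at Lagrange nodes; the identity $\partial_x v=\lim_{h\to 0}D_hv/h$ that you invoke requires $h\to 0$, at which point $D_hv$ no longer lies in any space $\T_p^{k-1}$. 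For fixed $h$ the needed inverse inequality $\|\partial_x w\|_{L^p}\le Ch^{-1}\|D_hw\|_{L^p}$ is false for general Sobolev functions (consider $w(x)=\sin(2\pi x/h)$, whose step-$h$ differences vanish identically), so the bound on the differences does not propagate back to $v$. In particular the case $m=k$, which is the genuinely new content of the theorem (exactly the range $k=m\ge 2$ left open by Jamet's and Shenk's results), is never reached by stepping down to order $k-1$.

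The paper closes precisely this gap by never trying to return from differences to derivatives. Instead of the discrete statement ``$D_hv$ vanishes at coarser nodes,'' it extracts from the vanishing of $v$ on $\Sigma^k(\hK)$ the \emph{integral} statement \eqref{tomoko}: every derivative $\partial^\delta u$ has vanishing mean over the boxes/segments $\square_\gamma^\delta$, i.e.\ $\partial^\delta u\in\Xi_p^{\delta,k}$. These mean-zero conditions are unisolvent for $\PP_{k-|\delta|}$ (Lemma~\ref{lem32}), so a compactness argument (Lemma~\ref{lem33}) yields the Poincar\'e-type bound $|\partial^\delta u|_{0,p,\hK}\le A_p^{\delta,k}\,|\partial^\delta u|_{k+1-|\delta|,p,\hK}$ on the reference element. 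The uniformity in $\alpha,\beta$ then comes not from an aspect-ratio-preserving contraction but from pulling back to $\hK$ and regrouping the anisotropic weights $(\alpha)^{-\gamma p}$ so that the extra factors $(\alpha)^{-\eta p}\ge 1$ in the denominator can simply be discarded (the chain \eqref{BabuskaAziztechnique}). Note also that the restriction $p>2$ in \eqref{extension-3d} for $k=m$ is needed for \emph{all} $k$, not only $k=1$, and it originates in the trace of $\partial^\delta u\in W^{1,p}(\hK)$ on one-dimensional segments in $\R^3$ (Lemma~\ref{lem31}), not in the continuity of $v$ itself; your attribution of the case distinctions solely to the embedding $W^{k+1,p}(\Kab)\subset C^0(\Kab)$ misplaces this constraint.
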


Using Theorem~\ref{squeezingtheorem} and affine transformations,
we can derive an error estimation on general triangles.
See Section~4.

The above mentioned estimations \eqref{maximum_angle},
\eqref{krizek}, \eqref{KrizekDuran}, \eqref{jamet} cover
Theorem~\ref{squeezingtheorem} partially.
We also mention that Shenk \cite{Shenk} showed \eqref{extension-2d} for
$p=2$, $k \ge 1$, $m=0$, $1$, and \eqref{extension-3d} for $p=2$,
$k \ge 2$, $m=0$, $1$.

Because of the restrictions for $m$, $k$, and $p$ in the above mentioned
estimations, it seems that \eqref{extension-2d} with $k=m \ge 2$, $1 \le p \le2$,
and \eqref{extension-3d} with $k=m \ge 2$, $1 \le  p \le 3$ have not yet
been proved.  To prove Theorem~\ref{squeezingtheorem}, we introduce the
difference quotients of functions with two or three variables in
Section~\ref{DQ}.  Then, Theorem~\ref{squeezingtheorem} is proved in
Section~\ref{sect-proof} by a method that is a straightforward extension
of Babu\v{s}la-Aziz's original argument.
  The notations of functional spaces used in
this paper are exactly the same as those in \cite{KobayashiTsuchiya3}.

\section{Difference quotients for multi-variable functions}\label{DQ}
In this section, we define the difference quotients for two- and three-variable
functions.  Our treatment is based on the theory of difference quotients
for one-variable functions given in standard textbooks such as
\cite{Atkinson} and \cite{Yamamoto1}.  All statements in this section
can be readily proved.

For a positive integer $k$, the set $\widehat{\Sigma}^k \subset \hK$  is
defined by
\begin{align*}
   \widehat{\Sigma}^k & := \left\{ \bfx_{\gamma} := \frac{\gamma}{k}
     \in \hK \biggm|  \gamma \in \N_0^d, \; 0 \le |\gamma| \le k \right\},
\end{align*}
where $\gamma/k=(a_1/k,\cdots,a_d/k)$ is understood as the
coordinate of a point in $\widehat{\Sigma}^k$.

For $\bfx_{\gamma} \in \widehat{\Sigma}^k$ and a multi-index $\delta \in \N_0^d$
with $|\gamma| \le k - |\delta|$, we define the correspondence $\Delta^\delta$
between nodes by
\[
   \Delta^\delta \bfx_{\gamma} := \bfx_{\gamma+\delta} \in \widehat{\Sigma}^k.
\]
For two multi-indexes  $\eta=(m_1,\cdots,m_d)$,
$\delta=(n_1,\cdots,n_d)$,  $\eta \le \delta$ means that
$m_i \le n_i$ $(i=1,\cdots,d)$.
Also, $\delta\cdot\eta$ and $\delta !$ are defined by
$\delta\cdot\eta := \sum_{i=1}^d m_i n_i$ and
 $\delta ! := n_1! \cdots n_d!$, respectively.
Using $\Delta^\delta$, we define the \textit{difference quotients}
on $\widehat{\Sigma}^k$ for $f \in C^0(\hK)$ by
\begin{align*}
 f^{|\delta|}[\bfx_{\gamma},\Delta^\delta \bfx_{\gamma}] :=
  k^{|\delta|}\sum_{\eta \le \delta} 
\frac{(-1)^{|\delta|-|\eta|}}{\eta!(\delta-\eta)!}
    f(\Delta^\eta \bfx_{\gamma}).
\end{align*}
Let $\bfz := (0,\cdots,0) \in \N_0^d$.  For simplicity, we denote 
$f^{|\delta|}[\bfx_{\bfz}, \Delta^\delta\bfx_{\bfz}]$ by
$f^{|\delta|}[\Delta^\delta\bfx_{\bfz}]$.  The following are
examples of $f^{|\delta|}[\Delta^\delta\bfx_{\bfz}]$: if $d=2$,
{\allowdisplaybreaks
\begin{align*}
   f^2[\Delta^{(2,0)}\bfx_{(0,0)}] & =
   \frac{k^2}{2} (f(\bfx_{(2,0)}) - 2 f(\bfx_{(1,0)})
     + f(\bfx_{(0,0)})), \\
   f^2[\Delta^{(1,1)}\bfx_{(0,0)}] & =
   k^2 (f(\bfx_{(1,1)}) -  f(\bfx_{(1,0)})
    - f(\bfx_{(0,1)}) + f(\bfx_{(0,0)})), \\
  f^3[\Delta^{(2,1)}\bfx_{(0,0)}] & =
   \frac{k^3}{2} (f(\bfx_{(2,1)}) - 2 f(\bfx_{(1,1)})
   + f(\bfx_{(0,1)}) - f(\bfx_{(2,0)}) \\
 & 
  \hspace{2.42cm}
   + 2 f(\bfx_{(1,0)}) - f(\bfx_{(0,0)})),
\end{align*}
}
and if $d = 3$,
\begin{align*}
  f^4[\Delta^{(2,1,1)}\bfx_{(0,0,0)}] & =
   \frac{k^4}{2} (f(\bfx_{(2,1,1)}) - 2 f(\bfx_{(1,1,1)})
   + f(\bfx_{(0,1,1)}) \\
    & \hspace{0.5cm} 
   - f(\bfx_{(2,0,1)}) + 2 f(\bfx_{(1,0,1)}) - f(\bfx_{(0,0,1)}) \\
    & \hspace{0.5cm} 
   - f(\bfx_{(2,1,0)}) + 2 f(\bfx_{(1,1,0)}) - f(\bfx_{(0,1,0)}) \\
 & \hspace{0.5cm}
   + f(\bfx_{(2,0,0)}) - 2 f(\bfx_{(1,0,0)}) + f(\bfx_{(0,0,0)})).
\end{align*}

Let $\eta \in \N_0^d$ be such that $|\eta|=1$.
The difference quotients clearly satisfy the following
recursive relations:
\begin{align*}
 f^{|\delta|}[\bfx_{\gamma},\Delta^{\delta}\bfx_{\gamma}] & = 
 \frac{k}{\delta\cdot\eta} \left(
 f^{|\delta|-1}[\bfx_{\gamma+\eta},\Delta^{\delta-\eta}\bfx_{\gamma+\eta}]
-  f^{|\delta|-1}[\bfx_{\gamma},\Delta^{\delta-\eta}\bfx_{\gamma}]  \right).
\end{align*}

If $f \in C^k(\hK)$, the difference quotient
$f^{|\delta|}[\bfx_{\gamma},\Delta^\delta \bfx_{\gamma}]$ is written as
an integral of $f$.  Setting $d=2$ and $\delta=(0,s)$, for example,
we have
\begin{gather*}
   f^{1}[\bfx_{(l,q)},\Delta^{(0,1)} \bfx_{(l,q)}]
   = k(f(\bfx_{l,q+1}) - f(\bfx_{lq}))
   = \int_{0}^{1} \partial_{x_2}
    f\left(\frac{l}{k},\frac{q}{k} + \frac{w_1}{k}\right) \dd w_1, \\
 \hspace{-9cm}
f^{s}[\bfx_{(l,p)},\Delta^{(0,s)} \bfx_{(l,q)}]  \\
\hspace{1cm}
= \int_{0}^{1}\int_0^{w_1}\cdots \int_0^{w_{s-1}} \partial^{(0,s)}
  f\left(\frac{l}{k},\frac{q}{k} + \frac{1}{k}(w_1 + \cdots + w_s)
   \right) \dd w_s \cdots \dd w_2 \dd w_1.
\end{gather*}
To provide a concise expression for the above integral, we introduce
the $s$-simplex
\begin{gather*}
   \Simp_s := \left\{(t_1,t_2,\cdots,t_s) \in \R^s \mid 
   t_i \ge 0, \ 0 \le t_1 + \cdots + t_s \le 1 \right\},
\end{gather*}
and the integral of $g \in L^1(\Simp_s)$ on $\Simp_s$ is defined by
\begin{gather*}
  \int_{\Simp_s} g(w_1,\cdots,w_k) \dd\mathbf{W_s}
  := \int_{0}^{1}\int_0^{w_1}\cdots \int_0^{w_{s-1}} 
  g(w_1, \cdots, w_s)  \dd w_s \cdots \dd w_2 \dd w_1.
\end{gather*}
Let us temporarily set $d=2$.
Then, $f^{s}[\bfx_{(l,q)},\Delta^{(0,s)} \bfx_{(l,q)}]$ becomes
\begin{align*}
f^{s}[\bfx_{(l,q)},\Delta^{(0,s)} \bfx_{(l,q)}]
  & = \int_{\Simp_s} \partial^{(0,s)}
  f\left(\frac{l}{k},\frac{q}{k} + \frac{1}{k}(w_1 + \cdots + w_s)
   \right) \dd \mathbf{W_s}.
\end{align*}
For a general multi-index $(t,s)$, we have
\begin{align*}
 f^{t+s}[ & \bfx_{(l,q)},\Delta^{(t,s)} \bfx_{(l,q)}] \\
  & = \int_{\Simp_s}\int_{\Simp_t}\partial^{(t,s)}
  f\left(\frac{l}{k} + \frac{1}{k}(z_1 + \cdots + z_t),
   \frac{q}{k} + \frac{1}{k}(w_1 + \cdots + w_s)
   \right) \dd \mathbf{Z_t} \dd \mathbf{W_s}.
\end{align*}
Let $\square_{\gamma}^\delta$ be the rectangle defined by 
$\bfx_{\gamma}$ and $\Delta^\delta \bfx_{\gamma}$
as the diagonal points.  If $\delta=(t,0)$
or $(0,s)$, $\square_{\gamma}^\delta$ degenerates to a segment.  For
$v \in L^1(\hK)$ and $\square_{\gamma}^\delta$ with $\gamma=(l,q)$, we
denote the integral as
\begin{equation*}
   \int_{\square_{\gamma}^{(t,s)}} v :=
   \int_{\Simp_s}\int_{\Simp_t}
  v\left(\frac{l}{k} + \frac{1}{k}(z_1 + \cdots + z_t),
   \frac{q}{k} + \frac{1}{k}(w_1 + \cdots + w_s)
   \right) \dd \mathbf{Z_t} \dd \mathbf{W_s}.
\end{equation*}
If $\square_{\gamma}^\delta$ degenerates to a segment, the integral is
understood as an integral on the segment.  By this notation, the
difference quotient $f^{t+s}[\bfx_{\gamma},\Delta^{(t,s)} \bfx_{\gamma}]$ is
written as 
\begin{align*}
 f^{t+s}[\bfx_{\gamma},\Delta^{(t,s)} \bfx_{\gamma}] 
  = \int_{\square_{\gamma}^{(t,s)}} \partial^{(t,s)} f.
\end{align*}
Therefore, if $u \in \T_p^k(\hK)$, then we have
\begin{align}
 0 = u^{t+s}[\bfx_{\gamma},\Delta^{(t,s)} \bfx_{\gamma}] 
  = \int_{\square_{\gamma}^{(t,s)}} \partial^{(t,s)} u, \qquad
  \forall \square_{\gamma}^{(t,s)} \subset \hK.
  \label{tomoko}
\end{align}

For the case $d=3$, the integral $\int_{\square_{\gamma}^\delta} v$ is
defined in exactly the same manner.

\section{Proof of Theorem~\ref{squeezingtheorem}}\label{sect-proof}
Let $S \subset \hK$ be a segment.  In the proof of
Theorem~\ref{squeezingtheorem}, the continuity of the trace operator $t$
defined as $t:W^{1,p}(\hK) \ni v \mapsto v|_S \in L^1(S)$ is crucial.
If $d=2$, the continuity of $t$ is standard and is mentioned in many
textbooks such as \cite{Brezis}. For the case $d=3$, the situation
becomes a bit more complicated.  If the continuous inclusion
$W^{k+1,p}(\hK) \subset C^0(\hK)$ holds, the continuity of $t$
is obvious. Even if this is not the case, we still have the following
lemma.  For the proof, see \cite[Theorem~4.12]{AdamsFournier},
\cite[Lemma~2.2]{Duran}, and \cite[Theorem~2.1]{LSU}.

\begin{lemma}\label{lem31}
 Let $d=3$ and $S \subset \hK$ be an arbitrary segment.  Then,
the following trace operators are well-defined and continuous:
\begin{align*}
   t:W^{1,p}(\hK) \to L^p(S), \quad 2 < p < \infty, \qquad
   t:W^{2,p}(\hK) \to L^p(S), \quad 1 \le p < \infty.
\end{align*}
\end{lemma}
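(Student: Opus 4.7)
The plan is to establish the two continuities separately, exploiting the difference in codimensions of $S$ in $\hK$ and the order of the Sobolev space.

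For the second statement $t : W^{2,p}(\hK) \to L^p(S)$ with $1 \le p < \infty$, I would factor the trace through a $2$-dimensional slice. Choose a plane $F$ containing the segment $S$; after an affine change of variables we may assume $F$ is a coordinate plane. Set $\widetilde{F} := F \cap \hK$ and apply the standard $3$D-to-$2$D trace theorem to obtain a continuous map $W^{2,p}(\hK) \to W^{1,p}(\widetilde{F})$, valid for every $1 \le p < \infty$ (for $u \in W^{2,p}$, both $u$ and $\nabla u$ admit $L^p$-traces on $\widetilde{F}$, so the tangential gradient of the boundary value lies in $L^p(\widetilde{F})$). Composing with the standard $2$D-to-$1$D trace $W^{1,p}(\widetilde{F}) \to L^p(S)$, also continuous for $p \ge 1$, yields the stated operator.

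For the first statement $t : W^{1,p}(\hK) \to L^p(S)$ with $2 < p < \infty$, the same factorisation is not available, since the $3$D-to-$2$D trace of $W^{1,p}$ lands only in a fractional Sobolev space on $\widetilde{F}$, which does not embed into $W^{1,p}$. I would instead argue directly using cylindrical coordinates around $S$. After a rigid motion we may assume $S$ lies along the $x_1$-axis; extend $\hK$ to a tube $T = I \times D$, where $D$ is a $2$D disk around the origin in the $(x_2,x_3)$-plane. For smooth $u$ and any $(r,\theta)$ in $D$ the identity
\[
u(t,0,0) = u(t, r\cos\theta, r\sin\theta) - \int_0^r \partial_\rho u(t, \rho\cos\theta, \rho\sin\theta)\, d\rho
\]
can be averaged with weight $r\,dr\,d\theta$ over $D$ to produce a pointwise bound of the form
\[
|u(t,0,0)| \le C\int_D |u(t,\cdot)|\,dx_2\,dx_3 + C\int_D r^{-1}\,|\nabla u(t,\cdot)|\,dx_2\,dx_3 .
\]
H\"older's inequality in $D$ then reduces matters to checking that $r^{-1}$ lies in $L^{p'}(D)$, which holds precisely when $p' < 2$, i.e., $p > 2$. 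Raising to the $p$-th power, integrating in $t$ over $I$, and passing to the limit by density of smooth functions yields the desired estimate.

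The main technical point is the sharp threshold at $p = 2$: the weight $r^{-1}$ fails to be $L^{p'}$-integrable on the $2$-dimensional cross-section exactly when $p \le 2$, which is what forbids extending the first statement to the endpoint and what distinguishes it from the second. The second statement escapes the issue because the extra derivative available in $W^{2,p}$ permits two successive codimension-$1$ traces in place of a single codimension-$2$ trace; the remaining ingredients (density of smooth functions and the classical codimension-$1$ trace theorems) are standard and recorded in the references cited in the excerpt.
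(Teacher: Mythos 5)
Your proof is essentially correct, but note that the paper itself does not prove Lemma~\ref{lem31} at all: it simply refers the reader to Adams--Fournier (Theorem~4.12), Dur\'an (Lemma~2.2) --- which addresses precisely the case $t:W^{1,p}(\hK)\to L^p(S)$, $p>2$, for edges of a tetrahedron --- and Lady\v{z}enskaja--Solonnikov--Ural'ceva. You have therefore supplied a self-contained argument where the authors rely on citations. Both of your mechanisms are sound: the factorisation $W^{2,p}(\hK)\to W^{1,p}(\widetilde F)\to L^p(S)$ works for all $1\le p<\infty$ because, as you say, the tangential gradient of the trace is the trace of the tangential gradient (this correctly sidesteps the fractional-order trace theorem, which would be unavailable at $p=1$); and the averaged fundamental-theorem-of-calculus identity over a transverse disk, with the computation that $r^{-1}\in L^{p'}(D)$ exactly when $p'<2$, i.e.\ $p>2$, is the standard route to the codimension-two trace and matches Dur\'an's argument in spirit. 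Two small points should be made explicit to close the proof. First, the tube $T=I\times D$ around $S$ need not lie inside $\hK$ (indeed $S$ may be an edge of $\hK$), so you should either invoke a bounded extension operator $W^{1,p}(\hK)\to W^{1,p}(\R^3)$, available since $\hK$ is Lipschitz, or replace the disk $D$ by a sector of uniform aperture contained in $\hK$, on which the same averaging works verbatim. Second, in the factorisation step $S$ is in general an interior segment of the planar slice $\widetilde F$ rather than part of its boundary, so the two-dimensional trace theorem should be applied to a subdomain of $\widetilde F$ (e.g.\ a triangle) having $S$ as a boundary edge. With these routine adjustments your argument is complete and, unlike the paper's treatment, does not defer to the literature.
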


For a multi-index $\delta$, $|\delta| \ge 1$, $p$ is taken so that
\begin{align}
 \begin{cases}
  2 < p \le \infty, & \text{ if } k+1-|\delta| = 1, \; d = 3,\\
 1 \le p \le \infty, &
   \text{ if } k+1-|\delta| \ge 2, \; d=3 \text{ or }
  d= 2. 
  \end{cases}
  \label{p-choice}
\end{align}
The set $\Xi_p^{\delta,k} \subset W^{k+1-|\delta|,p}(\hK)$ is then defined by
\begin{align*}
  \Xi_p^{\delta,k} & := \left\{ v \in W^{k+1-|\delta|,p}(\hK) \Bigm| 
    \int_{\square_{lp}^{\delta}} v = 0,\quad \forall
   \square_{lp}^{\delta} \subset \hK \right\}.
\end{align*}
By Lemma~\ref{lem31} and \eqref{p-choice}, $\Xi_p^{\delta,k}$ is
well-defined. Note that $u \in \T_p^{k}(\hK)$
implies $\partial^\delta u \in \Xi_p^{\delta,k}$ by definition and
\eqref{tomoko}.

\begin{lemma}\label{lem32} 
We have $\Xi_p^{\delta,k} \cap \mathcal{P}_{k-|\delta|} = \{0\}$.
That is, if $q \in \mathcal{P}_{k-|\delta|}$ belongs to $\Xi_p^{\delta,k}$,
then $q=0$.
\end{lemma}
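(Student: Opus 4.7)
The plan is to proceed by induction on $k$, for fixed $\delta$, using the translation covariance of the rectangles $\square_\gamma^\delta$ on the lattice $\widehat{\Sigma}^k$.

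For the base case $k = |\delta|$, the space $\mathcal{P}_{k-|\delta|}$ consists only of constants and the sole admissible index is $\gamma = \bfz$. If $q \equiv c \in \Xi_p^{\delta,k}$, then the single defining condition $\int_{\square_\bfz^\delta} c = c/\delta! = 0$ forces $c = 0$.

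For the inductive step, assume the lemma holds with $k$ replaced by $k-1$, and let $q \in \mathcal{P}_{k-|\delta|} \cap \Xi_p^{\delta,k}$. For each coordinate direction $j = 1,\ldots,d$ introduce the discrete shift
\[
r_j(x) := q(x + e_j/k) - q(x).
\]
Since $q$ has total degree at most $k - |\delta|$, the leading terms cancel and $r_j \in \mathcal{P}_{k - |\delta| - 1}$. The parametrization of $\square_\gamma^\delta$ from Section~\ref{DQ} gives the translation identity $\square_\gamma^\delta + e_j/k = \square_{\gamma+e_j}^\delta$, so the change of variable $y = x + e_j/k$ produces
\[
\int_{\square_\gamma^\delta} r_j \;=\; \int_{\square_{\gamma+e_j}^\delta} q \;-\; \int_{\square_\gamma^\delta} q
\]
for every $\gamma$ with $|\gamma| + |\delta| \le k - 1$. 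Both terms on the right vanish because $q \in \Xi_p^{\delta,k}$, so $r_j \in \Xi_p^{\delta,k-1}$, and the induction hypothesis yields $r_j \equiv 0$.

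The vanishing of every $r_j$ means $q$ is periodic in each coordinate with period $1/k$, and a polynomial with this property must be constant. Thus $q \equiv c$, and the base-case calculation forces $c = 0$, completing the induction. The main obstacle, in my view, is just the bookkeeping needed to establish the translation identity $\square_\gamma^\delta + e_j/k = \square_{\gamma+e_j}^\delta$ and thereby relate the $\Xi_p^{\delta,k}$-condition for $q$ to the $\Xi_p^{\delta,k-1}$-condition for $r_j$; once that calibration of indices is in place, everything else (the degree drop for $r_j$, the rigidity of periodic polynomials, and the base case) is routine.
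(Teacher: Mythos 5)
Your induction is a genuinely different argument from the paper's. The paper's own proof is a dimension count: it observes that the number of boxes $\square_{\gamma}^{\delta} \subset \hK$ equals $\dim \mathcal{P}_{k-|\delta|}$ and then \emph{asserts} that the functionals $q \mapsto \int_{\square_{\gamma}^{\delta}} q$ are linearly independent on $\mathcal{P}_{k-|\delta|}$, which forces $q=0$; the linear independence itself is not proved there. Your induction on $k$ via the discrete differences $r_j$ actually establishes that unisolvence constructively, so in that respect it is more complete than the paper's argument, at the cost of the index bookkeeping you mention. One step does need repair: as written, the claim ``$r_j \in \Xi_p^{\delta,k-1}$'' is not literally true. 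What you establish is that the integrals of $r_j$ vanish over the boxes of edge lengths that are multiples of $1/k$, anchored at the points $\gamma/k$ with $|\gamma|+|\delta| \le k-1$; but $\Xi_p^{\delta,k-1}$ is defined through boxes built on the coarser lattice $\widehat{\Sigma}^{k-1}$, with spacing $1/(k-1)$. The two families of conditions are carried onto one another by the dilation $x \mapsto \tfrac{k}{k-1}x$, which maps the first family of boxes bijectively onto the second and preserves polynomial degree, so you should apply the induction hypothesis to $\tilde{r}_j(x) := r_j\bigl(\tfrac{k-1}{k}x\bigr)$ rather than to $r_j$ itself. With that one-line fix the induction closes, and the remaining ingredients --- the degree drop for $r_j$, the translation identity $\square_{\gamma}^{\delta} + e_j/k = \square_{\gamma+e_j}^{\delta}$, the rigidity of $1/k$-periodic polynomials, and the base-case computation $\int_{\square_{\bfz}^{\delta}} c = c/\delta!$ --- are all correct as you wrote them.
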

\begin{proof} We notice that
$\mathrm{dim} \mathcal{P}_{k-|\delta|} = \#\{\square_{lp}^\delta \subset \hK \}$.
For example, if $k=4$, $d=2$, and $|\delta|=2$, then
$\mathrm{dim}\mathcal{P}_2 = 6$.  This corresponds to the fact that,
in $\hK$, there are six squares with size $1/4$  for $\delta=(1,1)$ and
there are six horizontal segments of length $1/2$ for $\delta=(2,0)$.
All their vertices (corners and end-points) belong to $\Sigma^4(\hK)$
(see Figure~1).  The situation is the same for $d=3$.
Now, suppose that $v \in \mathcal{P}_{k-|\delta|}$ satisfies
$\int_{\square_{lp}^{\delta}} q = 0$ for all
$\square_{lp}^{\delta} \subset \hK$.  This condition is linearly
independent and determines $q = 0$ uniquely.
\end{proof}

\begin{figure}[thb]
\begin{center}
  \includegraphics[width=6.3cm]{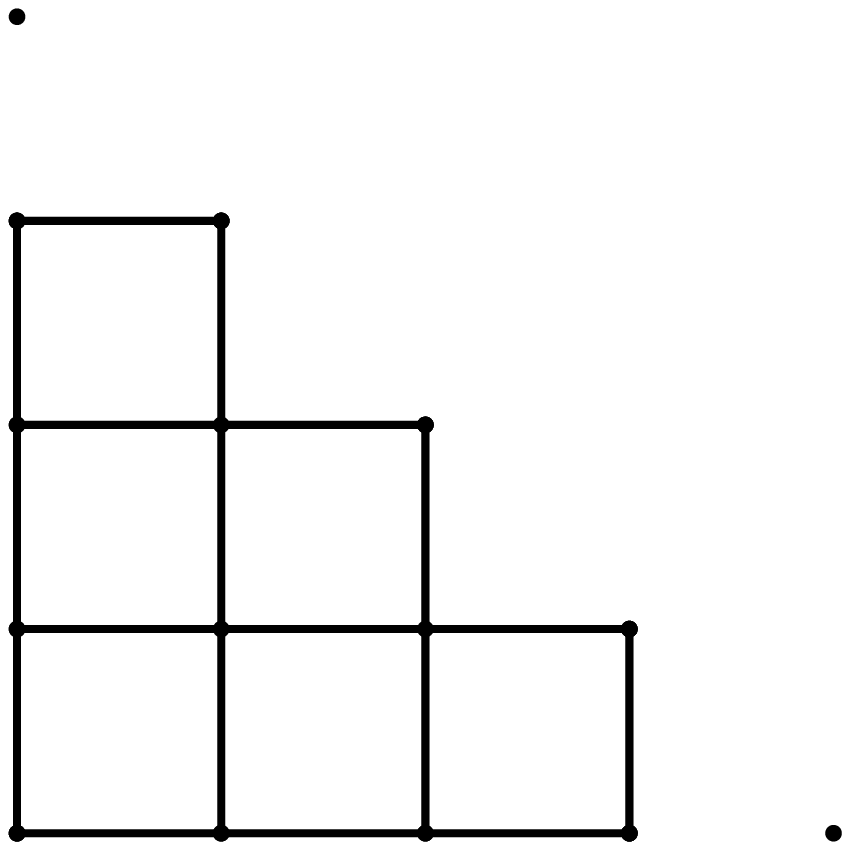} 
  \includegraphics[width=6.3cm]{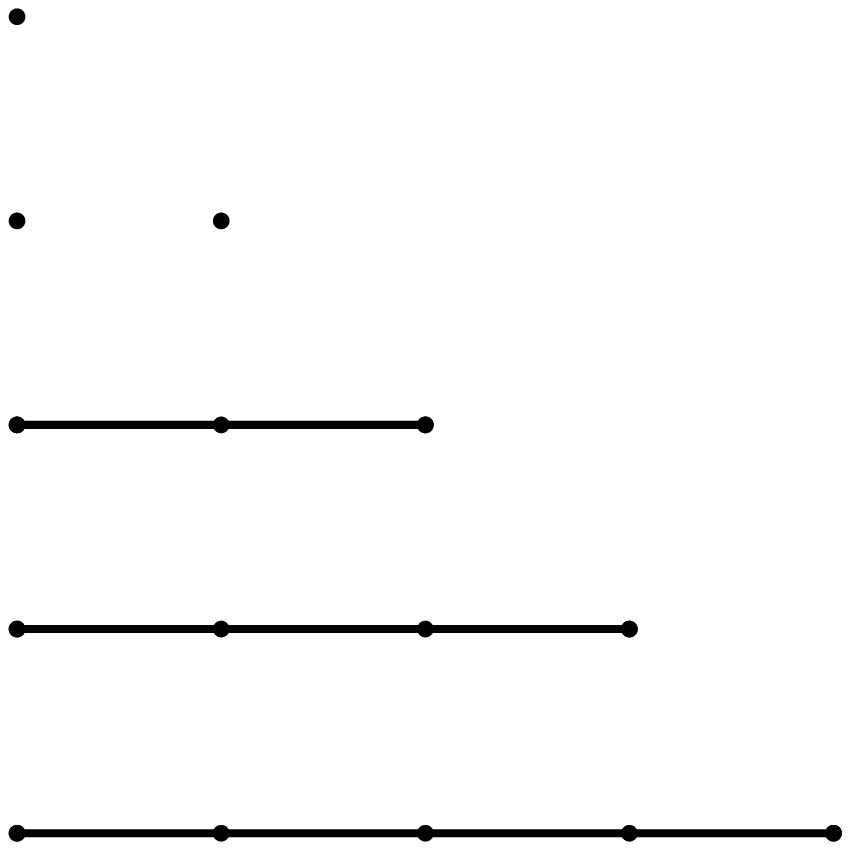} 
\caption{The six squares of size $1/4$ for $\delta=(1,1)$
and the (union of) six segments of length $1/2$ for
$\delta=(2,0)$ in $\hK$.}
 \label{squeezed}
\end{center}
\end{figure}

The constant $A_p^{\delta,k}$ is defined by
\begin{align*}
   A_p^{\delta,k} := \sup_{v \in \Xi_p^{\delta,k}} \frac{|v|_{0,p,\hK}}
  {|v|_{k+1-|\delta|,p,\hK}}.
\end{align*}
The following lemma is an extension of \cite[Lemma~2.1]{BabuskaAziz}.

\begin{lemma}\label{lem33}
Let $p$ be given by \eqref{p-choice}. Then, we have
$A_p^{\delta,k} < \infty$.
\end{lemma}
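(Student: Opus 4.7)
The plan is to run a Deny-Lions style argument in the spirit of \cite{BabuskaAziz}, replacing the point interpolation conditions used there by the vanishing of the simplex-integrals $v \mapsto \int_{\square_\gamma^\delta} v$. Write $\ell := k+1-|\delta|$ for brevity. The first step is to observe that each linear functional $L_\gamma(v) := \int_{\square_\gamma^\delta} v$ is bounded on $W^{\ell,p}(\hK)$: when $\square_\gamma^\delta$ is genuinely $d$-dimensional this follows immediately from H\"older's inequality, and when it degenerates to a segment (or, in $d=3$, to a $2$-face) a trace inequality is required. It is exactly here that the hypothesis \eqref{p-choice} on $p$ is used, via Lemma~\ref{lem31} in the three-dimensional case and the classical $W^{1,p}$ trace on a segment in the two-dimensional case. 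As a by-product, $\Xi_p^{\delta,k}$ is a closed subspace of $W^{\ell,p}(\hK)$.

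Next I would invoke the Deny-Lions (equivalently, Bramble-Hilbert) lemma on the fixed Lipschitz domain $\hK$: for every $v \in W^{\ell,p}(\hK)$ there exists $q_v \in \mathcal{P}_{\ell-1} = \mathcal{P}_{k-|\delta|}$ with $\|v - q_v\|_{\ell,p,\hK} \le C_0\,|v|_{\ell,p,\hK}$. Splitting $v = (v-q_v) + q_v$ and applying the triangle inequality gives
\[
   |v|_{0,p,\hK} \le C_0\,|v|_{\ell,p,\hK} + \|q_v\|_{0,p,\hK},
\]
so the task reduces to controlling $\|q_v\|_{0,p,\hK}$ when $v \in \Xi_p^{\delta,k}$. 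For this, Lemma~\ref{lem32} is the crucial input: it says exactly that the map $L : q \mapsto (L_\gamma(q))_\gamma$ is injective on $\mathcal{P}_{k-|\delta|}$, and therefore an isomorphism onto its image. By equivalence of norms on that finite-dimensional space, $\|q_v\|_{0,p,\hK} \le C_1 \max_\gamma |L_\gamma(q_v)|$. Since $L_\gamma(v) = 0$ for $v \in \Xi_p^{\delta,k}$, one has $L_\gamma(q_v) = -L_\gamma(v-q_v)$, which by the continuity established in Step 1 is bounded by $C_2 \|v-q_v\|_{\ell,p,\hK}$, and hence by $C_0 C_2\,|v|_{\ell,p,\hK}$ via Deny-Lions. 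Chaining these estimates yields a uniform constant $C$ with $|v|_{0,p,\hK} \le C\,|v|_{\ell,p,\hK}$ on $\Xi_p^{\delta,k}$.

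The only genuinely delicate point is the boundedness of $L_\gamma$ in the degenerate cases where $\square_\gamma^\delta$ has dimension strictly less than $d$; this is precisely the reason for the hypothesis \eqref{p-choice} and for invoking Lemma~\ref{lem31}. Everything else is a standard combination of the Deny-Lions estimate with equivalence of norms on a finite-dimensional space, so once the trace/continuity bookkeeping is done the conclusion $A_p^{\delta,k} < \infty$ follows directly.
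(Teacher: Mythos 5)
Your proposal is correct, and it rests on exactly the same three ingredients as the paper's proof — the boundedness of the functionals $L_\gamma(v)=\int_{\square_\gamma^\delta}v$ on $W^{k+1-|\delta|,p}(\hK)$ (which is where \eqref{p-choice} and Lemma~\ref{lem31} enter), the Deny--Lions/Bramble--Hilbert estimate $\inf_{q\in\mathcal{P}_{k-|\delta|}}\|v-q\|_{k+1-|\delta|,p,\hK}\le C_0|v|_{k+1-|\delta|,p,\hK}$ (the paper cites \cite[Theorem~3.1.1]{Ciarlet} for this), and the unisolvence statement of Lemma~\ref{lem32} — but it assembles them differently. The paper argues by contradiction: it takes a maximizing sequence $\{w_n\}$ with $|w_n|_{0,p,\hK}=1$ and vanishing top seminorm, chooses near-optimal polynomials $q_n$, extracts a convergent subsequence $q_{n_i}\to\bar q$ by compactness of bounded sets in the finite-dimensional space $\PP_{k-|\delta|}$, and uses continuity of the trace together with Lemma~\ref{lem32} to force $\bar q=0$ and hence the contradiction $1\le 0$. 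You instead give a direct quantitative chain: norm equivalence on $\mathcal{P}_{k-|\delta|}$ (positive-definiteness of $q\mapsto\max_\gamma|L_\gamma(q)|$ being precisely Lemma~\ref{lem32}) converts the vanishing of $L_\gamma(v)$ into the bound $\|q_v\|_{0,p,\hK}\le C_1C_2\|v-q_v\|_{k+1-|\delta|,p,\hK}$, and the triangle inequality finishes. What your version buys is an explicit bound $A_p^{\delta,k}\le C_0(1+C_1C_2)$ in terms of the Deny--Lions constant and the norm-equivalence constant, with no subsequence extraction; what it costs is nothing — the compactness the paper uses on $\PP_{k-|\delta|}$ is the same finite-dimensionality you invoke for norm equivalence. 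Both proofs are valid, and you correctly identified that the only delicate point is the continuity of $L_\gamma$ on the degenerate (codimension-two) boxes in $d=3$, which is exactly the role of \eqref{p-choice}.
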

\proof The proof is by contradiction.  Suppose that
$A_p^{\delta,k} = \infty$.  Then there exists a sequence 
$\{w_k\}_{i=1}^\infty \subset \Xi_p^{\delta,k}$ such that
$|w_n|_{0,p,\hK} = 1$ and 
$\lim_{n \to \infty} |w_n|_{k+1-|\delta|,p,\hK} = 0$.
By \cite[Theorem~3.1.1]{Ciarlet}, there exists 
$\{q_n\}\subset \PP_{k-|\delta|}$ such that
\begin{align*}
\|w_n + q_n\|_{k+1-|\delta|,p,\hK} 
   \le \inf_{q \in \PP_{k-|\delta|}}
  \|w_n + q\|_{k+1-|\delta|,p,\hK} + \frac{1}{n}
   \le C |w_n|_{k+1-|\delta|,p,\hK} + \frac{1}{n}
\end{align*}
and $\displaystyle\lim_{n \to \infty} \|w_n + q_n\|_{k+1-|\delta|,p,\hK} = 0$.
Because $\{w_n\}\subset W^{k+1-|\delta|,p}(\hK)$ is bounded,
$\{q_n\} \subset \PP_{k-|\delta|}$ is bounded as well.
Hence, there exists a subsequence $\{q_{n_i}\}$ such that
$q_{n_i}$ converges to $\bar{q} \in \PP_{k-|\delta|}$ and
$\lim_{n_i \to \infty} \|w_{n_i} + \bar{q}\|_{k+1-|\delta|,p,\hK} = 0$.
By definition and the continuity of the trace operator,
we have $\int_{\square_{lp}^\delta} w_{n_i} = 0$ and
\[
  0 = \lim_{n_i\to\infty} \int_{\square_{lp}^\delta}
   (w_{n_i} + \bar{q}) = \int_{\square_{lp}^\delta} \bar{q}, \qquad
    \forall \square_{lp}^\delta \subset \hK.
\]
Therefore, it follows from Lemma~\ref{lem31} that $\bar{q} = 0$.
This implies that
\[
  0 = \lim_{n_i \to \infty}\|w_{n_i}\|_{k+1-|\delta|,p,\hK} 
  \ge \lim_{n_i \to \infty}|w_{n_i}|_{0,p,\hK} = 1,
\]
which is a contradiction.
\endproof

\proof[The proof of Theorem~\ref{squeezingtheorem}]
The proof is an direct extension of the proof given in 
\cite[Lemma~2.2]{BabuskaAziz}.  Let $d=2$ initially. Define the linear
transformation $F_{\alpha}:\R^2 \to \R^2$ by
\[
   (x^*, y^*)^\top = (x, \alpha y)^\top, \qquad
   (x,y)^\top \in \R^2, \quad 0 < \alpha \le 1,
\]
which squeezes the reference element $\hK$ perpendicularly to
$\Ka:=F_\alpha(\hK)$.  Take an arbitrary $v \in W^{k+1,p}(\Ka)$ and define
$u \in W^{k+1,p}(\hK)$ by $u(x,y):= v(x,\alpha y)$.
To make the formula concise, we introduce the following notation.
For a multi-index $\gamma = (a, b) \in \N_0^2$ and a real $t \neq 0$,
$(\alpha)^{\gamma t} := \alpha^{b t}$.  Let $1 \le p < \infty$ and
 $1 \le m \le k$.  Because $u \in \T_p^k(\hK)$
and $\partial^\delta u \in \Xi_p^{\delta,k}$,
we may apply Lemma~\ref{lem33} and obtain
{\allowdisplaybreaks
\begin{align}
  \frac{|v|_{m,p,\Ka}^p}{|v|_{k+1,p,\Ka}^p}
   &  = \frac{\displaystyle\sum_{|\gamma| = m} \frac{m!}{\gamma!}
      (\alpha)^{-\gamma p}\left|\partial^{\gamma}u\right|_{0,p,\hK}^p}
      {\displaystyle \sum_{|\delta|= k+1} \frac{(k+1)!}{\delta!}
     (\alpha)^{- \delta p} \left|\partial^{\delta} u  \right|_{0,p,\hK}^p}
     \label{BabuskaAziztechnique} \\
  & = \frac{\displaystyle \sum_{|\gamma| = m}
    \frac{m!}{\gamma!}(\alpha)^{-\gamma p}
         \left|\partial^{\gamma}u \right|_{0,p,\hK}^p }
       {\displaystyle \sum_{|\gamma| = m} \frac{m!}{\gamma!}
      (\alpha)^{-\gamma p}
       \left(\sum_{|\eta| = k+1-m} \frac{(k+1-m)!}
             {\eta!(\alpha)^{\eta p} }
         \left|\partial^{\eta}
                 (\partial^{\gamma}u)\right|_{0,p,\hK}^p\right)} \notag \\
  & \le \frac{\displaystyle \sum_{|\gamma| = m}
    \frac{m!}{\gamma!}(\alpha)^{- \gamma p}
         \left|\partial^{\gamma}u \right|_{0,p,\hK}^p }
       {\displaystyle \sum_{|\gamma| = m} \frac{m!}{\gamma!}
      (\alpha)^{-\gamma p}
       \left(\sum_{|\eta| = k+1-m} \frac{(k+1-m)!}{\eta!}
         \left|\partial^{\eta}
                 (\partial^{\gamma}u)\right|_{0,p,\hK}^p\right)} \notag \\
   & = \frac{\sum_{|\gamma| = m} \frac{m!}{\gamma!}(\alpha)^{- \gamma p}
         \left|\partial^{\gamma}u \right|_{0,p,\hK}^p}
       {\sum_{|\gamma| = m} \frac{m!}{\gamma!}
      (\alpha)^{-\gamma p}
        \left|\partial^{\gamma}u \right|_{k+1-m,p,\hK}^p} \notag \\
   & \le \frac{\sum_{|\gamma|=m} 
      \frac{m!}{\gamma!}(\alpha)^{- \gamma p}
              |\partial^{\gamma}u|_{0,p,\hK}^p }
       {\sum_{|\gamma|=m} \frac{m!}{\gamma!}(\alpha)^{- \gamma p}
         \left(A_p^{\gamma,k}\right)^{-1}
        |\partial^{\gamma}u|_{0,p,\hK}^p} \le C_{k,m,p}^p, \notag
\end{align}
}
where $C_{k,m,,p} := \max_{|\gamma|=m} A_p^{\gamma,k}$.
Here, we use the equality
\[
  \frac{(k+1)!}{\delta!} =
   \sum_{\substack{\gamma+\eta = \delta \\ |\gamma|=m, |\eta|=k+1-m}}
   \frac{m!}{\gamma!} \frac{(k+1-m)!}{\eta!}.
\]
Hence, we obtain \eqref{extension-2d} for this case.  If $m=0$, we have
\begin{align}
  \frac{|v|_{0,p,\Ka}^p}{|v|_{k+1,p,\Ka}^p}
   &  = \frac{|u|_{0,p,\hK}^p}
      {\displaystyle \sum_{|\delta|= k+1} \frac{(k+1)!}{\delta!}
     (\alpha)^{- \delta p} \left|\partial^{\delta} u  \right|_{0,p,\hK}^p}
     \label{BabuskaAziztechnique-2} \\
  & \le \frac{|u|_{0,p,\hK}^p}
      {\displaystyle \sum_{|\delta|= k+1} \frac{(k+1)!}{\delta!}
      \left|\partial^{\delta} u  \right|_{0,p,\hK}^p}
   = \frac{|u|_{0,p,\hK}^p}{|u|_{k+1,p,\hK}^p} 
   \le B_k^{0,p}(\hK)^p < \infty. \notag
\end{align}
Setting $p=\infty$ and $1 \le m \le k$, we have
{\allowdisplaybreaks
\begin{align}
  \frac{|v|_{m,\infty,\Ka}}{|v|_{k+1,\infty,\Ka}}
   &  = \frac{ \displaystyle \max_{|\gamma|=m} \left\{(\alpha)^{-\gamma}
  \left|\partial^{\gamma}u\right|_{0,\infty,\hK}\right\}}
      {\displaystyle \max_{|\delta|= k+1} \left\{ (\alpha)^{-\delta} 
     \left|\partial^{\delta} u\right|_{0,\infty,\hK}\right\}}
     \label{BabuskaAziztechnique-3} \\
  & = \frac{ \displaystyle \max_{|\gamma|=m} \left\{(\alpha)^{-\gamma}
      \left|\partial^{\gamma}u\right|_{0,\infty,\hK}\right\}}
       {\displaystyle \max_{|\gamma|= m} \left\{(\alpha)^{-\gamma} 
     \max_{|\eta|=k+1-m} \left\{ (\alpha)^{-\eta}
     \left|\partial^{\eta}(\partial^\gamma u)\right|_{0,\infty,\hK}
      \right\}\right\}} \notag \\
  & \le \frac{ \displaystyle \max_{|\gamma|=m} \left\{(\alpha)^{-\gamma}
      \left|\partial^{\gamma}u\right|_{0,\infty,\hK}\right\}}
       {\displaystyle \max_{|\gamma|= m} \left\{(\alpha)^{-\gamma} 
     \max_{|\eta|=k+1-m} \left\{
     \left|\partial^{\eta}(\partial^\gamma u)\right|_{0,\infty,\hK}
        \right\}\right\}} \notag \\
   & = \frac{\max_{|\gamma|=m} \left\{(\alpha)^{-\gamma}
      \left|\partial^{\gamma}u\right|_{0,\infty,\hK}\right\}}
       {\max_{|\gamma|= m} \left\{(\alpha)^{-\gamma} 
     \left|\partial^\gamma u\right|_{k+1-m,\infty,\hK}
        \right\}} \notag \\
     & \le \frac{\max_{|\gamma|=m} \left\{(\alpha)^{-\gamma}
      \left|\partial^{\gamma}u\right|_{0,\infty,\hK}\right\}}
       {\max_{|\gamma|= m} \left\{(\alpha)^{-\gamma} 
        \left(A_\infty^{\gamma,k}\right)^{-1}
     \left|\partial^\gamma u\right|_{0,\infty,\hK}
        \right\}} \le C_{k,m,\infty}, \notag
 %       |\partial^{\gamma}u|_{0,p,\hK}^p} \le C_{k,m,p}^p, \notag
\end{align}
}
where $C_{k,m,\infty} := \max_{|\gamma|=m} A_\infty^{\gamma,k}$.
Now, the case with $p = \infty$ and $m=0$ is obvious.
Therefore, \eqref{extension-2d} is proved. 

Next, let $d=3$ and repeat the above proof.
Define the linear transformation
$F_{\alpha\beta}:\R^3 \to \R^3$ by
\[
   (x^*, y^*,z^*)^\top = (x, \alpha y, \beta z)^\top, \qquad
   (x,y,z)^\top \in \R^3, \quad 0 < \alpha,\beta \le 1,
\]
which squeezes the reference tetrahedron $\hK$ perpendicularly to
$\Kab:=F_{\alpha\beta}(\hK)$.  Take an arbitrary $v \in W^{k+1,p}(\Kab)$
and define $u \in W^{k+1,p}(\hK)$ by $u(x,y,z):= v(x,\alpha y,\beta z)$.
Let $p$ be given by \eqref{p-choice} with $m=|\delta|$.  To make formula
concise, we introduce the following notation. 
For a multi-index $\gamma = (a, b, c) \in \N_0^3$ and a real
$t \neq 0$, $(\alpha,\beta)^{\gamma t} := \alpha^{b t}\beta^{c t}$.
Because $u \in \T_p^k(\hK)$ and $\partial^\delta u \in \Xi_p^{\delta,k}$,
we may apply Lemma~\ref{lem33} as above.  Thus, we may repeat
\eqref{BabuskaAziztechnique}, \eqref{BabuskaAziztechnique-2}, and
\eqref{BabuskaAziztechnique-3} by replacing $\Ka$ with $\Kab$,
$(\alpha)^{\gamma p}$ with $(\alpha,\beta)^{\gamma p}$, etc.
Thus, \eqref{extension-3d} is proved. 
\endproof

\section{Concluding Remarks}
Theorem~\ref{squeezingtheorem} deals with only right triangles
and ``right'' tetrahedrons.  Based on
Theorem~\ref{squeezingtheorem}, a new error estimation of Lagrange
interpolation on triangles is obtained in \cite{KobayashiTsuchiya3}.
It should be emphasized that no geometric condition on triangles
is imposed in Theorem~\ref{thm4.1}. 

\begin{theorem}[Kobayashi-Tsuchiya \cite{KobayashiTsuchiya3}]
\label{thm4.1}
Let $K$ be an arbitrary triangle.  Let
$1 \le p \le \infty$, and $k$, $m$ be integers such that $k \ge 1$ and
$0 \le  m \le k$.  Then, for the $k$th-order Lagrange interpolation
 $\I_K^k$ on $K$, the following estimation holds:
\begin{equation}
   |v - \I_K^k v|_{m,p,K} \le C
   \left(\frac{R_K}{h_K}\right)^m h_K^{k+1-m} |v|_{k+1,p,K}
  =  C R_K^m h_K^{k+1-2m} |v|_{k+1,p,K}
  \label{main-estimate}
\end{equation}
for any $v \in W^{k+1,p}(K)$,
where the constant $C$ depends only on $k$, $p$ and is independent of
the geometry of $K$.
\end{theorem}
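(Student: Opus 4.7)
The plan is to reduce the estimate on the arbitrary triangle $K$ to one on a squeezed right triangle $K_\alpha$ and then invoke Theorem~\ref{squeezingtheorem}. Concretely, I want to build an affine map $\Phi : K_\alpha \to K$ whose Jacobian matrix $A$ satisfies $\|A\| \le C h_K$ and $\|A^{-1}\| \le C R_K/h_K^2$ with $C$ an absolute constant. Because affine maps send $\Sigma^k(K_\alpha)$ bijectively to $\Sigma^k(K)$ and pull polynomials of degree $\le k$ to polynomials of degree $\le k$, Lagrange interpolation commutes with the pull-back $u := v\circ\Phi$. The standard change-of-variables inequalities for Sobolev semi-norms then give
\[
   |v - \I_K^k v|_{m,p,K}
     \le \|A^{-1}\|^m |\det A|^{1/p}\, |u - \I_{K_\alpha}^k u|_{m,p,K_\alpha}
     \le C_{k,m,p}\, \|A^{-1}\|^m \|A\|^{k+1} |v|_{k+1,p,K},
\]
where Theorem~\ref{squeezingtheorem} supplies the last inequality. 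Inserting the bounds on $\|A\|$ and $\|A^{-1}\|$ yields $C R_K^m h_K^{k+1-2m} |v|_{k+1,p,K}$, which is precisely \eqref{main-estimate}.

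To construct $\Phi$, I first apply a rigid motion so that the longest edge of $K$ becomes the segment from $(0,0)$ to $(h_K,0)$ and the remaining vertex sits at $(a,b)$ with $b>0$; reflecting across $x = h_K/2$ if necessary, I also arrange $a \in [0, h_K/2]$. Choose
\[
   \alpha := \frac{\sqrt{a^2+b^2}}{h_K} \in (0,1]
\]
and let $\Phi$ be the unique affine map sending $(0,0),(1,0),(0,\alpha)$ to $(0,0),(h_K,0),(a,b)$. Its Jacobian
\[
  A = \begin{pmatrix} h_K & a/\alpha \\ 0 & b/\alpha \end{pmatrix}
\]
makes $A^\top A$ have trace $2h_K^2$ and determinant $h_K^4 b^2/(a^2+b^2)$ under this choice of $\alpha$, so the eigenvalues of $A^\top A$ are $h_K^2\bigl(1 \pm a/\sqrt{a^2+b^2}\bigr)$. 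In particular $\|A\| = \sigma_1 \le \sqrt{2}\, h_K$ without any further assumption on $K$.

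The lower bound on $\sigma_2$ is where the choice $a \in [0, h_K/2]$ pays off, and is the main obstacle. From $\sigma_1 \sigma_2 = |\det A| = h_K b/\alpha$ and $\sigma_1 \le \sqrt{2}\, h_K$ I get $\sigma_2 \ge h_K b/(\sqrt{2}\sqrt{a^2+b^2})$, while the classical formula $R_K = \sqrt{(a^2+b^2)((h_K-a)^2+b^2)}/(2b)$, combined with $\sqrt{(h_K-a)^2+b^2} \ge h_K - a \ge h_K/2$, gives $R_K \ge h_K\sqrt{a^2+b^2}/(4b)$, that is, $h_K^2/R_K \le 4 h_K b/\sqrt{a^2+b^2}$. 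Comparing the two estimates yields $\|A^{-1}\| = \sigma_2^{-1} \le 4\sqrt{2}\, R_K/h_K^2$. Without the reflection step the third vertex of $K$ could sit arbitrarily close to an endpoint of the longest edge, the factor $\sqrt{(h_K-a)^2+b^2}$ would degenerate, and no single affine map from any $K_\alpha$ could control both singular values simultaneously; this is the geometric input that lets the estimate hold for completely arbitrary $K$. Once the two norm bounds are in place, Theorem~\ref{squeezingtheorem} supplies the constant $C_{k,m,p}$, the pull-back identities close the argument, and \eqref{main-estimate} follows.
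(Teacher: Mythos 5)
Your argument is correct and is exactly the route the paper indicates: the paper itself states Theorem~\ref{thm4.1} without proof, citing \cite{KobayashiTsuchiya3} and remarking that it follows from Theorem~\ref{squeezingtheorem} together with affine transformations, which is precisely what you carry out (squeeze factor $\alpha=\sqrt{a^2+b^2}/h_K$, singular-value bounds $\|A\|\le\sqrt{2}\,h_K$ and $\|A^{-1}\|\le 4\sqrt{2}\,R_K/h_K^2$ via the circumradius formula). The only cosmetic omission is the harmless constant $C(m,d)$ in the change-of-variables inequalities for the Sobolev seminorms.
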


Any tetrahedron can be obtained from a ``skinny right'' tetrahedron
by an affine transformation.  To obtain an error estimate, we need
to estimate the ratio of the maximum and minimum singular values of
the Jacobian matrix of the affine transformation.
 If we obtain an expression of the ratio in terms of
geometric quantities of the tetrahedron, a new error estimation would
be obtained.  The authors hope that they will report further development
of error estimations on tetrahedrons in near future.

{\small

}

{\small
{\em Kenta Kobayashi}, 
          Graduate School of Commerce and Management, 
       Hitotsubashi University, Kunitachi, 186-8601, Japan,
       e-mail:  \texttt{kenta.k@r.hit-u.ac.jp}. \\
{\em Takuya Tsuchiya},
       Graduate School of Science and Engineering,
          Ehime Univesity, Matsuyama, 790-8577, Japan,
          e-mail: \texttt{tsuchiya@math.sci.ehime-u.ac.jp}.
}
\end{document}